\newtheorem{defi}{\indent Definition}
\newtheorem{lemma}[defi]{\indent Lemma}
\newtheorem{cor}[defi]{\indent Corollary}
\newtheorem{theo}[defi]{\indent Theorem}
\newtheorem{prop}[defi]{\indent Proposition}
\def\shuff#1#2{\mathbin{
      \hbox{\vbox{\hbox{\vrule \hskip#2 \vrule height#1 width 0pt}\hrule}\vbox{\hbox{\vrule \hskip#2 \vrule height#1 width 0pt\vrule }\hrule}}}}
\def\shuffl{{\mathchoice{\shuff{7pt}{3.5pt}}{\shuff{6pt}{3pt}}{\shuff{4pt}{2pt}}{\shuff{3pt}{1.5pt}}}}
\def\shuffle{\, \shuffl \,}
\newcommand{\h}{\mathcal{H}}
\newcommand{\WQSym}{\mathbf{WQSym}}
\title{Surjections and double posets}
\date{}
\begin{document}

\author{Lo\"\i c Foissy}
\address{LMPA Joseph Liouville\\
Universit\'e du Littoral C\^ote d'opale\\ Centre Universitaire de la Mi-Voix\\ 50, rue Ferdinand Buisson, CS 80699\\ 62228 Calais Cedex, France\\
email {foissy@lmpa.univ-littoral.fr}}

\author{Fr\'ed\'eric Patras}
\address{Universit\'e C\^ote d'Azur\\
        		UMR 7351 CNRS\\
        		Parc Valrose\\
        		06108 Nice Cedex 02
        		France\\
        		email {patras@unice.fr}}

\maketitle

\section{Introduction}
The algebraic and Hopf algebraic structures associated to permutations  \cite{MR,DHT,foissy2007bidendriform} have been intensively studied and applied in various contexts. A reason for their ubiquity is that they occur naturally in geometry, algebraic topology and classical and stochastic integral calculus because their  (noncommutative) shuffle products encode cartesian products of simplices \cite{patras91}. In combinatorics, they appear naturally in the theory of twisted algebras and twisted Hopf algebras (aka Hopf species) \cite[Sect. 5]{pr04} \cite{am1}. In the theory of operads, through the associative operad \cite{al} and its Hopf operad structure \cite[Sect. 2]{lp}, and so on...

In another direction, permutations can be encoded by pictures in the sense of Zelevinsky \cite{zelevinsky1981generalization}. This encoding is a key ingredient of his proof of the Littlewood-Richardson rule and of the Robinson-Schensted-Knuth correspondence. Recently, the first of us introduced a new approach to Hopf algebras of permutations by means of the notion of double posets \cite{MR2}, closely related to the notion of picture \cite{foissy2013plane,foissy2013algebraic,foissy2014deformation}. Besides making a bridge between the combinatorial, picture-theoretic, and the Hopf algebraic approaches to permutations, this new construction is natural in that it originates in the encoding of the statistics of inversions in symmetric groups (recall that, given a permutation $\sigma$ in the symmetric group $S_n$, the pair of integers $(i,j)$  in $[n]^2$,
with $i<j$,  defines an inversion if and only if $\sigma(i)>\sigma(j)$. Inversions are a key notion from the Coxeter group point of view on permutations; among others, their cardinal computes the length of the permutation).

The present article is dedicated to constructing and proving analog objects and results for surjections. Although they are a relatively less familiar object in view of applications of algebra and combinatorics than permutations, there are several reasons to be interested in surjections. For example, through the usual bijection with ordered partitions of finite initial subsets of the integers, they appear naturally in the study of the geometry of Coxeter groups \cite{am2} and of twisted Hopf algebras (aka Hopf species) \cite{patras2006twisted,am1}. 
They have also been involved recently in the modelling of quasi-shuffle products \cite{novelli2,foissy2,foissy3} and their applications in stochastics \cite{ebrahimi2015exponential,ebrahimi2015flows}.

The theory and structure of the Hopf algebra of surjections (known as WQSym, the Hopf algebra of word quasi-symmetric functions) \cite{hivert,chapoton2000algebres} parallels largely the one of bijections. The study of surjections from a picture and double poset theoretic point of view, which is the subject of the present article, seems instead new. 

The article is organized as follows. We introduce first a family of double posets, weak planar posets, that generalize the planar posets of \cite{foissy2013plane} and are in bijection with surjections or, equivalently, packed words. The following sections investigate their Hopf algebraic properties, which are inherited from the Hopf algebra structure of double posets and their relations with WQSym.

\section{Weak plane posets}

Recall that a double poset is a set equipped with two orders. A quasi-order is a binary relation $\leq$ which is reflexive and transitive but not necessarily antisymmetric (so that one may have $x\leq y$ and $y\leq x$ with $x\not= y$). A quasi-order is total when all elements are comparable (it holds that $x\leq y$ or $y\leq x$ for arbitrary $x$ and $y$). When $x\leq y$ and $y\leq x$, we write $x\equiv y$ and say that $x$ and $y$ are equivalent (the relation $\equiv$ is an equivalence relation). 
A quasi-poset is a set equipped with a quasi-order. All the posets, double posets, quasi-posets... we will consider are assumed to be finite (we omit therefore ``finite'' in our definitions and statements).

\begin{defi}
A weak plane poset is a double poset $(P,\leq_1,\leq_2)$ such that:
\begin{enumerate}
\item For all $x,y\in P$, ($x\leq_1 y$ and $x\leq_2 y$) $\Longrightarrow$ $(x=y)$.
\item The relation $\preceq$ defined on $P$ by ($x\leq_1 y$ or $x\leq_2 y$) is a total quasi-order.
\end{enumerate} 
\end{defi}

In particular, the relation defined by ($x\preceq y$ and $y\preceq x$) is an equivalence relation (we denote it as above by $\equiv$).\\

\textbf{Example}. A plane poset is a double poset $(P,\leq_1,\leq_2)$ such that: 
\begin{itemize}
\item For all $x,y\in P$,  if $x$ and $y$ are comparable for both $\leq_1$ and $\leq_2$, then $x=y$.
\item For all $x,y\in P$, $x$ and $y$ are comparable for $\leq_1$ or for $\leq_2$.
\end{itemize}
By proposition 11 of \cite{foissy2013plane}, if $(P,\leq_1,\leq_2)$ is a plane poset, then $\preceq$ is a total order, and obviously (1) is also satisfied.
So plane posets are weak plane. Moreover, if $P$ is a weak plane poset, then $\preceq$ is an order if,
and only if, $P$ is plane: by (2), two distinct elements $x,y$ are always comparable and the fact that $\preceq$ is an order implies that they cannot be comparable for both $\leq_1$ and $\leq_2$.

\begin{lemma}\label{weaklem}
Let $(P,\leq_1,\leq_2)$ be a weak plane poset. Then:
\begin{enumerate}
\item[(3)] The relation $\ll$ defined on $P$ by ($y\leq_1 x$ or $x\leq_2 y$) is a total order.
\end{enumerate}
\end{lemma}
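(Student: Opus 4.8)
The plan is to verify the four defining properties of a total order for the relation $\ll$, where $x\ll y$ means ($y\leq_1 x$ or $x\leq_2 y$): reflexivity, totality, antisymmetry, and transitivity. Reflexivity is immediate, since $x\leq_1 x$ already gives $x\ll x$. Totality is almost as quick: for arbitrary $x,y$, property (2) yields $x\preceq y$ or $y\preceq x$, so at least one of $x\leq_1 y$, $x\leq_2 y$, $y\leq_1 x$, $y\leq_2 x$ holds; here $y\leq_1 x$ or $x\leq_2 y$ gives $x\ll y$, while $x\leq_1 y$ or $y\leq_2 x$ gives $y\ll x$. Thus $x\ll y$ or $y\ll x$ in every case.

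For antisymmetry I would assume $x\ll y$ and $y\ll x$ and expand both disjunctions into four cases. The cases ($x\leq_1 y$ and $y\leq_1 x$) and ($x\leq_2 y$ and $y\leq_2 x$) force $x=y$ by the antisymmetry of the component orders $\leq_1$ and $\leq_2$. The two remaining mixed cases reduce to ($y\leq_1 x$ and $y\leq_2 x$) or ($x\leq_1 y$ and $x\leq_2 y$), each of which forces $x=y$ directly by property (1).

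Transitivity is the crux. Given $x\ll y$ and $y\ll z$, I expand into four cases. Two are settled by transitivity of a single component order: from ($y\leq_1 x$ and $z\leq_1 y$) one gets $z\leq_1 x$, hence $x\ll z$; from ($x\leq_2 y$ and $y\leq_2 z$) one gets $x\leq_2 z$, hence $x\ll z$. The main obstacle is the two \emph{mixed} cases, typified by $y\leq_1 x$ together with $y\leq_2 z$, where no chain links $x$ and $z$ through a single order. To handle this I would invoke totality: by property (2) either $x\preceq z$ or $z\preceq x$. If the chosen disjunct already delivers $z\leq_1 x$ or $x\leq_2 z$ we are done; otherwise it supplies a relation that, composed with one hypothesis, yields two comparisons of the \emph{same} type between $y$ and $z$ (resp. $y$ and $x$), and property (1) collapses these to an equality $y=z$ (resp. $y=x$). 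Substituting that equality back into the remaining hypothesis then produces the desired $z\leq_1 x$ or $x\leq_2 z$. The symmetric mixed case $x\leq_2 y$ with $z\leq_1 y$ is treated identically, exchanging the roles of $\leq_1$ and $\leq_2$.

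The only genuine difficulty thus lies in these mixed transitivity cases, where the two hypotheses come from different component orders and cannot be composed directly. The resolution uses property (2) to force a comparison between the endpoints $x$ and $z$, and then exploits property (1) as a kind of ``antisymmetry across the two orders'' to eliminate the unwanted alternative, always reducing to a situation already covered by the transitivity of $\leq_1$ or $\leq_2$.
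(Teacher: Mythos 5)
Your proposal is correct and follows essentially the same route as the paper: totality and the easy transitivity cases come straight from property (2) and transitivity of the component orders, antisymmetry splits into the same four cases resolved by antisymmetry of $\leq_1$, $\leq_2$ and property (1), and the mixed transitivity cases are handled exactly as in the paper by invoking totality of $\preceq$ and using property (1) to collapse the unwanted disjunct to an equality. No gaps.
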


\begin{proof}  Let $x,y,z\in P$, such that $x\ll y$ and $y\ll z$. Three cases are possible:
\begin{itemize}
\item ($y\leq_1 x$ and $z\leq_1 y$) or ($x\leq_2 y$ and $y\leq_2 z$). Then ($z\leq_1 x$ or $x\leq_2 z$), so $x\ll z$.
\item $x\leq_2 y$ and $z\leq_1 y$. As $\preceq$ is a total quasi-order, two subcases are possible.
\begin{itemize}
\item $x\preceq z$, then $x\leq_1 z$ or $x\leq_2 z$. If $x\leq_2 z$, then $x\ll z$. If $x \leq_ 1 z$, then $x\leq_1 y$ and $x\leq_2 y$.
By (1), $x=y$, so $x\ll z$.
\item $z\preceq x$, then $z \leq_1 x$ or $z\leq_2 x$. If $z\leq_1 x$, then $x\ll z$. If $z\leq_2 x$, then $z\leq_2 y$ and $z\leq_1 y$.
By (1), $y=z$, so $x\ll z$. 
\end{itemize}
\item $y\leq_1 x$ and $y\leq_2 z$. Similar proof.
\end{itemize}
Therefore, $\ll$ is transitive. 

Let $x,y\in P$, such that $x\ll y$ and $y\ll x$. Two cases are possible:
\begin{enumerate}
\item ($y\leq_1 x$ and $x\leq_1 y$) or ($x\leq_2 y$ and $y\leq_2 x$): then $x=y$.
\item $y\leq_1 x$ and $y\leq_2 x$, or $x\leq_1 y$ and $x\leq_2 y$: by (1), $x=y$.
\end{enumerate}
So $\ll$ is an order.

For all $x,y\in P$, $x\preceq y$ or $y\preceq x$, so $y\leq_1 x$ or $x\leq_2 y$ or $x\leq_1 y$ or $y\leq_2 x$, so
$x\ll y$ or $y\ll x$: $\ll$ is total. \end{proof}

\textbf{Remark}. (3) implies (1), but not (2).\\

\section{Surjections and packed words}

When a surjection $f$ from $[n]$ to $[k]$ is represented by the sequence of its values, $w_f:=f(1)\dots f(n)$, the word $w_f$ is packed: its set of letters identifies with an initial subset of the integers (in that case, $[k]$). Conversely, an arbitrary packed word of length $n$ 
can always be obtained in that way: packed words (on length $n$) are in bijection with surjections (with domain $[n]$ and codomain an initial subset of the integers).

Recall also that total quasi-orders $\preceq$ on $[n]$ are in bijection with packed words. An example gives the general rule: assume that $n=6$ and that the quasi-order is defined by
$$2\equiv 5\preceq 1\equiv 3\equiv 6\preceq 4.$$
Then, the corresponding packed word is $212312$: the first equivalence class $\{2,5\}$ gives the position of letter 1, the second, $\{1,3,6\}$, the positions of letter 2, and so on.

\begin{prop}
Let $w$ be a packed word of length $n$. The double poset $P_w$ (also written $Dp(w)$) is defined by $P_w=([n],\leq_1,\leq_2)$, with:
\begin{align*}
\forall i,j\in [n],\: i\leq_1 j&\Longleftrightarrow (i\geq j\mbox{ and } w(i)\leq w(j)),\\
i\leq_2 j&\Longleftrightarrow (i\leq j\mbox{ and } w(i)\leq w(j))
\end{align*}
It is a weak plane poset. The total quasi-order $\preceq$ is the one associated bijectively to $w$.
\end{prop}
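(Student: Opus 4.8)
The plan is to verify in turn that $\leq_1$ and $\leq_2$ are genuine partial orders, that the defining conditions (1) and (2) of a weak plane poset hold, and finally that the induced quasi-order $\preceq$ coincides with the one attached to $w$ under the bijection recalled above. First I would check that $\leq_1$ and $\leq_2$ are partial orders. Reflexivity is immediate, since $w(i)\leq w(i)$ holds together with both $i\geq i$ and $i\leq i$. Antisymmetry follows because $i\leq_2 j$ and $j\leq_2 i$ force $i\leq j$ and $j\leq i$, hence $i=j$, and symmetrically for $\leq_1$ using $i\geq j$ and $j\geq i$. Transitivity is equally direct: for $\leq_2$ one chains $i\leq j\leq k$ with $w(i)\leq w(j)\leq w(k)$, and for $\leq_1$ one chains $i\geq j\geq k$ with the same inequalities on values. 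Thus $P_w$ is a bona fide double poset.

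Condition (1) is the easiest point: if $i\leq_1 j$ and $i\leq_2 j$, then the index constraints give simultaneously $i\geq j$ and $i\leq j$, whence $i=j$. The conceptual heart of the argument is the explicit computation of $\preceq$. I would unfold the definition as
\begin{align*}
i\preceq j &\Longleftrightarrow (i\leq_1 j)\text{ or }(i\leq_2 j)\\
&\Longleftrightarrow \big(i\geq j\text{ and }w(i)\leq w(j)\big)\text{ or }\big(i\leq j\text{ and }w(i)\leq w(j)\big)\\
&\Longleftrightarrow w(i)\leq w(j).
\end{align*}
The last equivalence is the step worth isolating: every pair of indices satisfies $i\geq j$ or $i\leq j$, so the disjunction of the index constraints is vacuously true, and the two relations $\leq_1,\leq_2$ together collapse onto the single condition $w(i)\leq w(j)$.

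From this closed form, condition (2) is immediate: reflexivity, transitivity and totality of $\preceq$ are all inherited directly from the corresponding properties of $\leq$ on the integers, applied to the values $w(i),w(j),w(k)$. Finally, to identify $\preceq$ with the quasi-order bijectively associated to $w$, I would invoke the recipe recalled before the statement: that quasi-order has the fibers of $w$ as its equivalence classes, ordered so that a class carrying a smaller letter precedes one carrying a larger letter, which in formulas says exactly that $i$ precedes $j$ if and only if $w(i)\leq w(j)$. This matches verbatim the closed form just obtained for $\preceq$, so the two agree. I expect no genuine obstacle here; the proposition is a chain of routine verifications, the only step deserving emphasis being the collapse of the disjunction in the computation of $\preceq$, which is precisely what forces $\preceq$ to be total.
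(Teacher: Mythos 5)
Your proposal is correct and follows essentially the same route as the paper's proof: verify the double-poset axioms (which the paper dismisses as obvious), check condition (1) from the clash of $i\geq j$ and $i\leq j$, and collapse the disjunction defining $\preceq$ to the single condition $w(i)\leq w(j)$, which is exactly the quasi-order attached to $w$. The only item in the paper's proof you omit is the (strictly speaking extra, but later useful) observation that the total order $\ll$ of Lemma~\ref{weaklem} coincides with the natural order on $[n]$.
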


\begin{proof}
The fact that $P_w$ is a double poset is obvious. For all $i,j\in [n]$, if $i\leq_1 j$ and $i\leq_2 j$, then $i\geq j$ and $i\leq j$, so $i=j$: (1) is verified.
Moreover:
$$i \preceq j \Longleftrightarrow w(i)\leq w(j),$$
so $\preceq$ is indeed a total quasi-order. Finally, remark that the total order $\ll$ agrees with the natural order:
$$i\ll j\Longleftrightarrow i\leq j.$$
\end{proof}

\begin{theo}
The set of packed words of length $n$ and of isomorphism classes of weak plane posets are in bijection through $Dp$. The inverse map $pack$ is given as follows. Let $P$ a weak double poset. By Lemma \ref{weaklem} we can assume that $P=[n]$ with $\ll$ the natural order. Then, $Dp^{-1}(P)=:pack(P)$ is the packed word associated to the total quasi-order $\preceq$.
\end{theo}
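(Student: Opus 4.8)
The plan is to show that $Dp$ and $pack$ are mutually inverse bijections between packed words of length $n$ and isomorphism classes of weak plane posets. First I would check that $pack$ is well-defined on isomorphism classes: any isomorphism of double posets preserves $\leq_1$ and $\leq_2$, hence preserves the derived relations $\ll$ and $\preceq$; since by Lemma~\ref{weaklem} the relation $\ll$ is a total order, each isomorphism class has a representative on $[n]$ with $\ll$ equal to the natural order, and the total quasi-order $\preceq$ (equivalently, its associated packed word) is an isomorphism invariant. That $pack(P)$ is genuinely a packed word then follows from property~(2): $\preceq$ is a total quasi-order on $[n]$, and total quasi-orders on $[n]$ biject with packed words as recalled above.

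For the composite $pack\circ Dp$ I would invoke the preceding proposition directly. Given a packed word $w$, it asserts that $Dp(w)=([n],\leq_1,\leq_2)$ is weak plane, that its order $\ll$ coincides with the natural order on $[n]$, and that its quasi-order $\preceq$ is exactly the one associated to $w$. Hence no relabeling is needed, and $pack(Dp(w))$ is the packed word associated to this $\preceq$, namely $w$ itself, so $pack\circ Dp=\mathrm{id}$.

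The substantial direction is $Dp\circ pack=\mathrm{id}$. Fix a weak plane poset $P$, realized by Lemma~\ref{weaklem} on $[n]$ with $\ll$ the natural order, and set $w=pack(P)$, so that $w(i)\leq w(j)\iff i\preceq j$. I must show that the two orders produced by $Dp(w)$ agree with the original $\leq_1$ and $\leq_2$. Translating the defining conditions $(i\geq j\text{ and }w(i)\leq w(j))$ and $(i\leq j\text{ and }w(i)\leq w(j))$ through $\ll$ and $\preceq$, this amounts to the two equivalences
\[ i\leq_1 j\iff (j\ll i\text{ and }i\preceq j), \qquad i\leq_2 j\iff (i\ll j\text{ and }i\preceq j). \]
The forward implications are immediate from the definitions of $\ll$ and $\preceq$. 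For the converse of the $\leq_2$ identity, from $i\ll j$ one gets $j\leq_1 i$ or $i\leq_2 j$, and from $i\preceq j$ one gets $i\leq_1 j$ or $i\leq_2 j$; if $i\leq_2 j$ failed in both, then $j\leq_1 i$ and $i\leq_1 j$, forcing $i=j$ by antisymmetry of $\leq_1$ and hence $i\leq_2 j$ by reflexivity. The converse of the $\leq_1$ identity is symmetric, using antisymmetry of $\leq_2$. This gives $Dp(pack(P))=P$ on the chosen representative, hence equality of isomorphism classes.

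The main obstacle is precisely this last verification: one must show the four a priori possible combinations of the disjunction clauses collapse to the single desired relation, and the only nontrivial collapses are resolved by antisymmetry of the individual orders $\leq_1,\leq_2$ (property~(1) turns out not to be needed here). Everything else is bookkeeping resting on the preceding proposition and Lemma~\ref{weaklem}.
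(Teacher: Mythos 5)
Your proposal is correct and takes essentially the same route as the paper: the core computation (translating $i\geq j$ and $w(i)\leq w(j)$ into the disjunctions defining $\ll$ and $\preceq$, distributing, and collapsing the mixed case via antisymmetry) is exactly the paper's chain of equivalences, and your well-definedness argument for $pack$ is the paper's injectivity argument (an isomorphism preserves $\ll$, hence is the identity on $[n]$) in slightly different packaging. Your parenthetical observation that property (1) is not needed for this step also matches the paper, which only invokes antisymmetry of $\leq_2$ there.
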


\begin{proof} Let us show first that for any packed words $w$, $w'$, the double posets $P_w$ and $P_{w'}$ are isomorphic if, and only if, $w=w'$.  Let $f:P_w\to P_{w'}$ be an isomorphism. Then $f$ is increasing from $([n],\ll)$ to $([n'],\ll')$.
As $\ll$ and $\ll'$ are the usual total orders of $[n]$ and $[n']$, $n=n'$ and $f=Id_{[n]}$. Consequently, for all $i,j\in [n]$, assuming that $i\leq j$:
$$w(i)\leq w(j)\Longleftrightarrow i\leq_2 j\Longleftrightarrow i\leq_2' j\Longleftrightarrow w'(i)\leq w'(j).$$
As $w$ and $w'$ are packed words, $w=w'$. \\

Let $P$ now be a weak plane poset and let us show that $Dp(pack(P))=P$.
We can assume that $(P,\ll)=([n],\leq)$. The packed word $w=pack(P)$ is such that for all $i,j\in [n]$, $i\preceq j$ $\Longleftrightarrow$ $w(i)\leq w(j)$.
We denote by $\leq_1'$ and $\leq_2'$ the orders of $P_w$. Then, for all $i,j\in [n]$:
\begin{align*}
i\leq'_1 j&\Longleftrightarrow (j\leq i) \mbox{ and }(w(i)\leq w(j))\\
&\Longleftrightarrow (j\ll i)\mbox{ and }(i\preceq j)\\
&\Longleftrightarrow (i\leq_1 j\mbox{ or } j\leq_2 i)\mbox{ and }(i\leq_1 j\mbox{ or } i\leq_2 j)\\
&\Longleftrightarrow (i\leq_1 j)\mbox{ or }(j\leq_2 i \mbox{ and } i\leq_2 j)\\
&\Longleftrightarrow (i\leq_1 j)\mbox{ or }(i=j)\\
&\Longleftrightarrow i\leq_1 j. 
\end{align*}
So $\leq_1'=\leq_1$. Similarly, $\leq_2'=\leq_2$.
\end{proof}

\section{The self-dual Hopf algebra structure}

We denote by $\h_{WPP}$ the vector space generated by isomorphism classes of weak plane posets and show below how definitions and results in \cite{MR2,FMP2017} apply in this context (definitions and results relative to double posets are taken from \cite{MR2}).

Let $P,Q$ be two double posets. Two preorders are defined on $P\sqcup Q$ by:
\begin{align*}
\forall i,j\in P\sqcup Q,\: i\leq_1 j\mbox{ if }&(i,j\in P\mbox{ and } i\leq_1 j) \\
&\mbox{ or }(i,j\in Q\mbox{ and } i\leq_1 j);\\
\: i\leq_2 j\mbox{ if }&(i,j\in P\mbox{ and } i\leq_2 j)\\
&\mbox{ or }(i,j\in Q\mbox{ and } i\leq_2 j)\\
&\mbox{ or }(i\in P\mbox{ and } j\in Q).
\end{align*}
This defines a double poset denoted by $PQ$. Extending this product by bilinearity makes the linear span $\h_{DP}$ of double posets an associative algebra, whose
unit is the empty double poset denoted $1$. 
If $P$ and $Q$ are weak plane posets, then so is $PQ$: $\h_{WPP}$ is a subalgebra of $\h_{DP}$.

\begin{defi}
Let $P$ be a double poset (resp. weak double poset) and let $X\subseteq P$.
\begin{itemize}
\item $X$ is also a double poset (resp. weak) by restriction of $\leq_1$ and $\leq_2$:
we denote this double poset (resp. weak) by $P_{\mid X}$. 
\item We shall say that $X$ is an open set of $P$ if:
$$\forall i,j\in P,\: i\leq_1 j\mbox{ and }i\in X\Longrightarrow j\in X.$$
The set of open sets of $P$ is denoted by $Top(P)$.
\item A coproduct is defined on $\h_{DP}$ (resp. $\h_{WPP}$)
by
$$\Delta(P)=\sum_{O\in Top(P)}P_{\mid P\setminus O}\otimes P_{\mid O}.$$
\end{itemize} \end{defi}

\begin{theo}
The product and the coproduct equip $\h_{DP}$ and therefore its subspace $\h_{WPP}$ with the structure of a graded, connected Hopf algebra.
\end{theo}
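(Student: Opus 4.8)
The plan is to establish the full bialgebra structure on $\h_{DP}$ first, then deduce the Hopf property from gradedness and connectedness, and finally transfer everything to $\h_{WPP}$ by a closure argument. Associativity of the product and the fact that the empty poset is a unit are already recorded above, so the work concentrates on the coproduct and on the compatibility of the two structures. Throughout I would use the observation that a subset $O\subseteq P$ lies in $Top(P)$ precisely when it is an up-set (filter) for $\leq_1$, i.e. $i\leq_1 j$ and $i\in O$ force $j\in O$; in particular $\emptyset$ and $P$ are always open, which yields the counit, and finite unions of open sets are open, which will drive coassociativity.

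For the counit I would take $\varepsilon$ sending the empty poset to $1$ and every nonempty poset to $0$: since the extreme terms $O=\emptyset$ and $O=P$ of $\Delta(P)$ produce $P\otimes 1$ and $1\otimes P$, applying $\varepsilon$ on either side isolates exactly these and returns $P$. Coassociativity is the central point. Expanding $(\Delta\otimes\mathrm{id})\Delta(P)$ and $(\mathrm{id}\otimes\Delta)\Delta(P)$ rewrites each as a sum of terms $P_{\mid A}\otimes P_{\mid B}\otimes P_{\mid C}$ over ordered partitions $P=A\sqcup B\sqcup C$, and the task is to show both sides are indexed by the same partitions. I would prove that, for such a partition, the condition coming from $(\mathrm{id}\otimes\Delta)\Delta$, namely ``$B\cup C$ is open in $P$ and $C$ is open in $P_{\mid B\cup C}$'', and the condition coming from $(\Delta\otimes\mathrm{id})\Delta$, namely ``$C$ is open in $P$ and $B$ is open in $P_{\mid A\cup B}$'', are both equivalent to the single symmetric statement that $C$ and $B\cup C$ are open in $P$. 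Each implication is a short chase using that open means up-set for $\leq_1$ and that restriction only restricts the relation; this is the step I expect to be the main obstacle, as it is where the combinatorics of nested open sets must be pinned down.

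For the bialgebra compatibility I would first determine the open sets of a product: because $\leq_1$ on $PQ$ carries no relation between $P$ and $Q$, every up-set of $PQ$ is a disjoint union $O_P\sqcup O_Q$ of up-sets, so that $Top(PQ)$ is in bijection with $Top(P)\times Top(Q)$. Combined with the evident identity $(PQ)_{\mid X_P\sqcup X_Q}=P_{\mid X_P}\,Q_{\mid X_Q}$, this factorizes $\Delta(PQ)$ as the product $\Delta(P)\Delta(Q)$ computed componentwise in $\h_{DP}\otimes\h_{DP}$; together with $\Delta(1)=1\otimes 1$ and multiplicativity of $\varepsilon$ this gives a bialgebra. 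Grading by cardinality of the underlying set makes both product and coproduct homogeneous, and the degree-zero component is spanned by the empty poset alone, so $\h_{DP}$ is graded and connected; the antipode then exists automatically through the standard recursion for connected graded bialgebras, giving the Hopf algebra.

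Finally I would transfer the structure to $\h_{WPP}$. That it is a subalgebra is already noted. For stability under $\Delta$ it suffices that every restriction $P_{\mid X}$ of a weak plane poset is again weak plane: condition (1) is a universally quantified implication and is therefore inherited by any subset, and the restriction of a total quasi-order $\preceq$ to a subset is again a total quasi-order, which gives condition (2). Hence $\Delta(\h_{WPP})\subseteq\h_{WPP}\otimes\h_{WPP}$, and $\h_{WPP}$ is a graded connected Hopf subalgebra of $\h_{DP}$.
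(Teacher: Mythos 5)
Your proof is correct and is essentially the standard argument: the paper itself gives no proof of this theorem, deferring entirely to \cite{MR2} for the compatibility of product and coproduct, and your reconstruction (counit from the extreme open sets, coassociativity via the symmetric characterization ``$C$ and $B\cup C$ both open in $P$'' of nested open sets, $Top(PQ)\cong Top(P)\times Top(Q)$ for the bialgebra axiom, antipode from the connected graded recursion, and stability of weak plane posets under restriction) is exactly what that reference supplies. No gaps.
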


See \cite{MR2} for a proof of the compatibility properties between the product and the coproduct characterizing a Hopf algebra.

Recall that, for $P=(P,\leq_1,\leq_2)$, $\iota(P):=(P,\leq_2,\leq_1)$.
If $P$ is a weak plane poset, then so is $\iota(P)$. Recall also that there exists a pairing on $\h_{DP}$ defined, for two double posets $P,Q$ by
$$\langle P,Q\rangle:=\sharp Pic(P,Q),$$
where $Pic(P,Q)$ stands for the number of pictures between $P$ and $Q$ (a picture between $P$ and $Q$ is a bijection $f$ such that $$i\leq_1 j\Rightarrow f(i)\leq_2 f(j), \ \ f(i)\leq_1 f(j)\Rightarrow i\leq_2 j.)$$
The Hopf algebra of double poset $\h_{DP}$ is self-dual for this pairing. By Proposition 24 of \cite{FMP2017}:

\begin{prop}
The Hopf algebra $\h_{WPP}$ is a self-dual Hopf subalgebra of the Hopf algebra of double poset $\h_{DP}$.
\end{prop}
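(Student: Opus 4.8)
The plan is to verify the hypotheses of Proposition 24 of \cite{FMP2017} and then invoke it. That proposition gives a sufficient criterion for a subspace of $\h_{DP}$ to be a self-dual Hopf subalgebra, the picture pairing together with the involution $\iota$ supplying the self-duality; the hypotheses I would check are that $\h_{WPP}$ is an honest Hopf subalgebra and that it is stable under $\iota$. So the first task is the Hopf-subalgebra property, the second is $\iota$-stability, and the substance of the self-duality is delegated to the cited statement.

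For the Hopf-subalgebra property, closure under the product is already recorded in the text: $PQ$ is weak plane whenever $P$ and $Q$ are. For closure under the coproduct $\Delta(P)=\sum_{O\in Top(P)}P_{\mid P\setminus O}\otimes P_{\mid O}$, I would observe that both factors are induced sub-double-posets of $P$, and that the two defining conditions of a weak plane poset are inherited by every induced subset. Indeed condition (1) is a universally quantified implication on pairs of elements, so it holds a fortiori on any subset; and condition (2), the totality of $\preceq$, restricts to any subset because a restriction of a total quasi-order is again a total quasi-order. Hence $P_{\mid O}$ and $P_{\mid P\setminus O}$ are weak plane, so $\h_{WPP}$ is a sub-coalgebra, and together with the product this makes it a sub-bialgebra. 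Since $\h_{DP}$ is graded and connected and $\h_{WPP}$ is a graded connected sub-bialgebra, it is automatically a Hopf subalgebra, the antipode being recovered by restriction.

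For the involution I would simply recall the statement already made above: if $P$ is weak plane then so is $\iota(P)=(P,\leq_2,\leq_1)$, because conditions (1) and (2) are symmetric in $\leq_1$ and $\leq_2$. Thus $\h_{WPP}$ is stable under $\iota$, the hypotheses of Proposition 24 of \cite{FMP2017} are met, and the self-duality follows.

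The routine closure checks are not where the difficulty lies; the genuine content is the self-duality, which is precisely what the cited proposition packages. Were one to prove it directly rather than by citation, the hard part would be to show that the picture pairing restricts to a \emph{nondegenerate} pairing on $\h_{WPP}$, since nondegeneracy on all of $\h_{DP}$ need not descend to a subspace. The natural route is a triangularity argument: one notes that the identity map is always a picture from $P$ to $\iota(P)$, so $\langle P,\iota(P)\rangle\geq 1$, and then orders the isomorphism classes so that $\langle P,\iota(Q)\rangle$ is unitriangular. Stability under $\iota$ is exactly what guarantees that the diagonal partner $\iota(P)$ of each weak plane poset $P$ again lies in $\h_{WPP}$, so the restricted pairing matrix keeps its triangular shape with nonzero diagonal and is therefore nondegenerate. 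A nondegenerate Hopf pairing on a graded connected Hopf algebra with finite-dimensional homogeneous components identifies it with its graded dual, which is self-duality.
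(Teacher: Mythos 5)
Your proposal is correct and follows essentially the same route as the paper: the paper also deduces the statement by citing Proposition 24 of \cite{FMP2017}, after recording that $\h_{WPP}$ is closed under the product, the coproduct (restriction preserves weak planarity) and the involution $\iota$. Your additional sketch of the direct triangularity argument via $\langle P,\iota(P)\rangle\geq 1$ is a reasonable account of what the cited proposition packages, but the paper itself does not spell it out.
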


\section{Linear extensions and $\WQSym$.}
\begin{defi}
Let $P=(P,\leq_1,\leq_2)$ be a weak plane poset. We assume that $(P,\ll)=([n],\leq)$. A linear extension of $P$ is a surjective map
$f:[n]\longrightarrow [k]$ such that:
\begin{enumerate}
\item For all $i,j\in [n]$, $i\leq_1 j$ $\Longrightarrow$ $f(i)\leq f(j)$.
\item For all $i,j\in [n]$, $f(i)=f(j)$ $\Longrightarrow$ $i\equiv j$.
\end{enumerate}
The set of linear extensions of $P$ is denoted by $Lin(P)$.
\end{defi}

If $f$ is a linear extension of a weak plane poset $P$, we see it as a packed word $f(1)\ldots f(n)$.\\

Let us denote by $PW(n)$ the set of packed words of length $n$. Recall that
$\WQSym$ is given two products, its usual one, denoted by $.$, and the shifted shuffle product $\shuffle$ \cite{FM}. Denoting by $\Delta$
the usual coproduct of $\WQSym$, both $(\WQSym,\shuffle,\Delta)$ and $(\WQSym,.,\Delta)$ are Hopf algebras.

For example:
\begin{align*}
(1)\shuffle (1)&=(12)+(21),\\
(1).(2)&=(12)+(21)+(11).
\end{align*}

\begin{prop}
The following map is a Hopf algebra morphism:
$$\phi:\left\{\begin{array}{rcl}
\h_{WPP}&\longrightarrow&(\WQSym,\shuffle,\Delta)\\
P&\longrightarrow&\displaystyle \sum_{f\in Lin(P)} f.
\end{array}\right.$$
\end{prop}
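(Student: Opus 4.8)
The plan is to verify separately that $\phi$ is an algebra morphism and a coalgebra morphism. Since $\h_{WPP}$ and $(\WQSym,\shuffle,\Delta)$ are both graded connected bialgebras and $\phi$ evidently preserves the unit and counit, establishing $\phi(PQ)=\phi(P)\shuffle\phi(Q)$ together with $\Delta\circ\phi=(\phi\otimes\phi)\circ\Delta$ makes $\phi$ a bialgebra morphism, and compatibility with the antipodes is then automatic in the graded connected setting. So these two identities are all I need.

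For the product I would first unwind the structure of $PQ$. Writing $(P,\ll)=([n],\leq)$ and $(Q,\ll)=([m],\leq)$, the definition of the product shows that $\ll$ orders all of $P$ before all of $Q$, that $\leq_1$ on $PQ$ has no relations across $P$ and $Q$, and that the only cross relations are $x\leq_2 y$ for $x\in P,\,y\in Q$. Hence $\preceq$ places $P$ strictly below $Q$: for such $x,y$ one has $x\preceq y$ but not $y\preceq x$, so $x\not\equiv y$. The decisive consequence is that any $f\in Lin(PQ)$ must take \emph{disjoint} value sets on $P$ and on $Q$ — otherwise condition (2) would force some $x\in P$, $y\in Q$ to satisfy $x\equiv y$ — and these two sets partition $[\max f]$. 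I would then show that $f\mapsto(pack(f|_P),\,pack(f|_Q),\,f(P))$ is a bijection from $Lin(PQ)$ onto triples $(g,h,A)$ with $g\in Lin(P)$, $h\in Lin(Q)$, and $A\subseteq[\max g+\max h]$ of size $\max g$ recording which value classes go to $P$. Conditions (1)--(2) for $f$ decouple into those for $g$ and for $h$: this uses that $\leq_1$ has no cross relations (so condition (1) never couples the two factors) and that $\equiv$ restricts correctly to each factor, since the direct relations defining $\preceq$ are preserved by restriction. Conversely every triple gives a genuine linear extension, because $A$ and its complement $B=[\max g+\max h]\setminus A$ are disjoint, ruling out any forbidden identification. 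Finally I would note that summing the concatenated words $g_A\,h_B$ — where $g_A$ (resp. $h_B$) is $g$ (resp. $h$) with values relabelled order-preservingly onto $A$ (resp. $B$) — over all choices of $A$ is exactly the defining expansion of the shifted shuffle $g\shuffle h$, namely the shuffle of the $\preceq$-classes of $g$ with those of $h$. Summing over $g,h$ yields $\phi(PQ)=\phi(P)\shuffle\phi(Q)$.

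For the coproduct I would build a bijection between pairs $(f,i)$, where $f\in Lin(P)$ and $0\leq i\leq\max f$ index the terms of $\Delta(f)$, and triples $(O,g,h)$ with $O\in Top(P)$, $g\in Lin(P_{\mid P\setminus O})$, $h\in Lin(P_{\mid O})$, which index the terms of $(\phi\otimes\phi)\Delta(P)$. Given $(f,i)$, set $O_i=\{x:f(x)>i\}$; this is open because condition (1), $x\leq_1 y\Rightarrow f(x)\leq f(y)$, is precisely the up-set condition defining $Top(P)$. Cutting $f$ at the value threshold $i$ produces $g=f|_{\leq i}\in Lin(P_{\mid P\setminus O_i})$ and $h=pack(f|_{>i})\in Lin(P_{\mid O_i})$, which is exactly how the coproduct of $\WQSym$ deconcatenates by value levels. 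Conversely, stacking $h$ above $g$ (shifting the values of $h$ above those of $g$) reconstructs a map that is a linear extension of $P$ precisely because $O$ is a $\leq_1$-up-set: no $\leq_1$-relation runs from $O$ down into $P\setminus O$, so condition (1) is never violated across the cut, while condition (2) holds since the two parts use disjoint values. Packedness of $f$ makes $i\mapsto O_i$ strictly decreasing, so the correspondence is bijective, and both sides of $\Delta\circ\phi=(\phi\otimes\phi)\circ\Delta$ equal $\sum_{(f,i)} f|_{\leq i}\otimes pack(f|_{>i})$.

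I expect the product identity to be the main obstacle. The delicate point is that $PQ$ \emph{concatenates} the positions of $P$ and $Q$, whereas a linear extension is free to \emph{interleave} their values in any order-preserving way; the match with $\shuffle$ therefore hinges on reading the $\WQSym$ shifted shuffle as an interleaving of value classes rather than of positions, and the one-directional cross relation $P\leq_2 Q$ is exactly what forces disjoint value sets and makes this interleaving faithful. By contrast the coproduct identity is a clean ``threshold versus up-set'' dictionary, once one observes that open sets are precisely the $\leq_1$-up-sets.
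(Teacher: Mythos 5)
Your argument is correct, and it actually supplies the proof that the paper omits: the authors only remark that the argument is ``similar to the proof of Theorem 18 of \cite{foissy2013plane}'' (the plane-poset/FQSym case), so your write-up is the expected adaptation rather than a divergence from it. Both halves are sound. For the product, the decomposition of $Lin(PQ)$ into triples $(g,h,A)$ rests on exactly the two facts you isolate: $\leq_1$ has no relations across the two factors, and the one-directional cross relation $x\leq_2 y$ kills all cross-equivalences, forcing $f(P)\cap f(Q)=\emptyset$; together with the observation that $\equiv$ is defined by \emph{direct} relations and hence commutes with restriction, the conditions on $f$ decouple as you claim. The coproduct half is the clean threshold/up-set dictionary, and your inverse construction (stacking $h$ above $g$) works precisely because openness forbids $\leq_1$-relations from $O$ into $P\setminus O$.

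The one point a reader must grant you is the reading of $\shuffle$ as an interleaving of value classes with positions concatenated, rather than a shuffle of letters after shifting values; these are genuinely different operations (for instance $(11)\shuffle(1)$ has two terms under the first reading, $112+221$, and three under the second, $112+121+211$), and the paper's lone example $(1)\shuffle(1)=(12)+(21)$ does not distinguish them. Your reading is the correct one: it is the only one consistent with the paper's later assertion $\phi(P_f)=\sum_{f\leq g}g$ and with the degree-$2$ matrix of $\phi$, since $\phi(P_{11})=11+21$ and $\phi(P_{11}P_{1})=\phi(P_{112})=112+221+213+312+321=(11+21)\shuffle(1)$ holds for the value-class interleaving but fails for the letter shuffle. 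You flagged exactly the right delicate point, and resolved it correctly.
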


\begin{proof} We omit it since it is similar to the proof of Theorem 18 of \cite{foissy2013plane}. \end{proof}

\begin{prop}
For all $f,g\in PW(n)$, we shall say that $f\leq g$ if:
\begin{enumerate}
\item For all $i,j\in [n]$, $i\geq j$ and $f(i)\leq f(j)$ $\Longrightarrow$ $g(i)\leq g(j)$.
\item For all $i,j\in [n]$, $g(i)=g(j)$ $\Longrightarrow$ $f(i)=f(j)$.
\end{enumerate}
Then $\leq$ is an order on $PW(n)$. Moreover, for all $f\in PW(n)$:
$$\phi(P_f)=\sum_{f\leq g} g.$$
\end{prop}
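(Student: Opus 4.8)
The plan is to treat the two assertions separately, handling the identity $\phi(P_f)=\sum_{f\leq g}g$ first, since it reduces to unwinding definitions, and only then establishing that $\leq$ is a partial order, where the sole delicate point is antisymmetry.

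For the formula I would recall that $\phi(P_f)=\sum_{g\in Lin(P_f)}g$ and that, by the Proposition defining $P_f$, we may take $(P_f,\ll)=([n],\leq)$ with $i\leq_1 j\Leftrightarrow(i\geq j\text{ and }f(i)\leq f(j))$ and $i\equiv j\Leftrightarrow f(i)=f(j)$. Substituting these into the two defining conditions of a linear extension $g$ of $P_f$ turns condition (1), $i\leq_1 j\Rightarrow g(i)\leq g(j)$, into exactly condition (1) of $f\leq g$, and condition (2), $g(i)=g(j)\Rightarrow i\equiv j$, into exactly condition (2) of $f\leq g$. Hence $g\in Lin(P_f)$ if and only if $f\leq g$, which gives the identity at once and, as a byproduct, identifies $\leq$ with the relation ``$g$ is a linear extension of $P_f$''.

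Showing that $\leq$ is an order then splits into three parts. Reflexivity and transitivity are immediate: reflexivity holds because both conditions become tautologies when $g=f$, and transitivity follows by chaining, using for condition (1) that $i\geq j$ and $f(i)\leq f(j)$ first yield $g(i)\leq g(j)$ and then $h(i)\leq h(j)$, and for condition (2) that $h(i)=h(j)$ forces $g(i)=g(j)$ and then $f(i)=f(j)$. The crux is antisymmetry. Assuming $f\leq g$ and $g\leq f$, the two instances of condition (2) combine to $f(i)=f(j)\Leftrightarrow g(i)=g(j)$, so $f$ and $g$ induce the same partition of $[n]$ into fibers (blocks); since both are packed words, they can differ only in the total order their values impose on these blocks. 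I would then prove that these two block-orders coincide. Given distinct blocks $B,B'$, let $m$ be the largest element of $B\cup B'$, say $m\in B$; if $f$ assigns $B$ a value below that of $B'$, then for every $j\in B'$ we have $m>j$ and $f(m)\leq f(j)$, so condition (1) of $f\leq g$ gives $g(m)\leq g(j)$, whence, the blocks being distinct, $g(m)<g(j)$, so $g$ too orders $B$ below $B'$. The reverse implication uses condition (1) of $g\leq f$ symmetrically, and the case $m\in B'$ follows by exchanging the roles of $B$ and $B'$. Thus $f$ and $g$ order every pair of blocks identically, hence agree as packed words, so $f=g$.

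The main obstacle is precisely this antisymmetry argument: the defining conditions are not symmetric in the natural order of $[n]$, so one cannot compare the values of two blocks at an arbitrary pair of positions. The device of testing at the largest position of $B\cup B'$ is what makes condition (1) applicable in the correct direction; care is needed to treat both placements of this maximal element and to invoke the opposite inequality $g\leq f$ in order to upgrade the implications into equivalences.
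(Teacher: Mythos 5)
Your proof is correct and follows essentially the same route as the paper: the identity $\phi(P_f)=\sum_{f\leq g}g$ is obtained by unwinding the definition of $Lin(P_f)$ (using $i\leq_1 j\Leftrightarrow i\geq j$ and $f(i)\leq f(j)$, and $i\equiv j\Leftrightarrow f(i)=f(j)$), and antisymmetry is the only nontrivial point, settled by showing that $f$ and $g$ have the same fibers and order them identically. Your pairwise comparison of blocks via the maximum of $B\cup B'$ is a minor repackaging of the paper's iterative argument, which identifies $\max f^{-1}(1)=\max g^{-1}(1)$ and then proceeds value by value; both rest on the same maximality device that makes the hypothesis $i\geq j$ of condition (1) available.
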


\begin{proof}The relation
$\leq$ is clearly transitive and reflexive. Let us assume that $f\leq g$ and $g\leq f$. By (2), for all $i,j\in [n]$,
$f(i)=f(j)$ if, and only if, $g(i)=g(j)$. Hence, putting $k=\max(f)=\max(g)$, there exists a unique permutation $\sigma\in \mathfrak{S}_k$
such that for all $i\in [k]$, $f^{-1}(i)=g^{-1}(\sigma(i))$. By (1), if $i\geq j$, then $g(i)\leq g(j)$ $\Longleftrightarrow$ $f(i)\leq f(j)$.
Hence:
\begin{align*}
\max f^{-1}(1)&=\max\{ i\in [n]\mid \forall j\leq i, f(i)\leq f(j)\}\\
&=\max\{i\in [n]\mid \forall j\leq i, g(i)\leq g(j)\}\\
&=\max g^{-1}(1)\\
&=\max(g^{-1}(\sigma(1)),
\end{align*}
so $\sigma(1)=1$. Iterating this, one shows that $\sigma=Id_k$, so $f=g$: the relation $\leq$ is an order.
Let $f,g\in PW(n)$. Then:
\begin{align*}
g\in Lin(f)&\Longleftrightarrow\begin{cases}
\forall i,j\in [n], \: i\geq j\mbox{ and }f(i)\leq f(j)\Longrightarrow g(i)\leq g(j),\\
\forall i,j\in [n], \: g(i)=g(j)\Longrightarrow f(i)=f(j)
\end{cases}\\
&\Longleftrightarrow f\leq g.
\end{align*}
So $Lin(P_f)=\{g\in PW(n), f\leq g\}$. \end{proof}

\begin{cor}
$\phi$ is a Hopf algebra isomorphism.
\end{cor}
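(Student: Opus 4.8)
The plan is to exploit the fact that $\phi$ has already been shown (in the Proposition two statements earlier) to be a graded Hopf algebra morphism. Consequently it only remains to verify that $\phi$ is bijective, and since $\phi$ preserves the grading---a linear extension of a weak plane poset on $[n]$ is a surjection out of $[n]$, hence a packed word of length $n$---it suffices to check bijectivity in each degree $n$ separately. The strategy is a standard triangularity argument.

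First I would fix $n$ and identify matching bases of the two homogeneous components. By the Theorem of Section 3, the map $Dp$ is a bijection between the set of packed words of length $n$ and the set of isomorphism classes of weak plane posets on $[n]$; hence $\{P_f : f\in PW(n)\}$ is a basis of the degree-$n$ component of $\h_{WPP}$. On the other side, $PW(n)$ is itself the canonical basis of the degree-$n$ component of $\WQSym$. Both components are thus finite-dimensional of the same dimension $\sharp PW(n)$, indexed by the same set. Next I would read off the matrix of $\phi$ in these bases from the previous Proposition, which gives $\phi(P_f)=\sum_{f\leq g} g$: the coefficient of $g$ in $\phi(P_f)$ equals $1$ if $f\leq g$ and $0$ otherwise, where $\leq$ is the partial order on $PW(n)$ introduced there. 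In particular, by reflexivity $f\leq f$, so every diagonal entry equals $1$. Choosing a linear extension of the finite poset $(PW(n),\leq)$ and ordering the basis accordingly presents this matrix as upper unitriangular, hence invertible. Therefore $\phi$ restricts to a linear isomorphism in each degree, so $\phi$ is a bijective graded Hopf algebra morphism, that is, a Hopf algebra isomorphism.

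The conclusion is essentially forced once the two preceding results are in hand, so there is no serious obstacle. The only point requiring a little care is the passage from ``triangular with respect to a partial order'' to ``invertible'': here one genuinely needs that $(PW(n),\leq)$ is an actual partial order (established in the previous Proposition, via the permutation-and-minima argument showing antisymmetry), since this is what allows one to refine $\leq$ to a compatible total order of the index set and thereby exhibit the transition matrix as upper unitriangular.
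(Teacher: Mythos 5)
Your argument is correct and is exactly the one the paper intends: the Corollary is stated without proof precisely because the preceding Proposition exhibits $\phi(P_f)=\sum_{f\leq g}g$ with $\leq$ a partial order on $PW(n)$, so the transition matrix is unitriangular for any linear extension of $\leq$ and $\phi$ is invertible in each degree. Your additional care about matching the bases via the bijection $Dp$ and about needing antisymmetry of $\leq$ to refine it to a total order is exactly the right level of detail.
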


Here are the Hasse graphs of $(PW(2),\leq)$ and $(PW(3),\leq)$.
$$\xymatrix{&21\ar@{-}[ld]\ar@{-}[rd]&\\
12&&11}$$
$$\xymatrix{&&&321\ar@{-}[llld]\ar@{-}[rrrd]\ar@{-}[ld]\ar@{-}[rd]&&&\\
221\ar@{-}[dd]\ar@{-}[rrrd]&&312\ar@{-}|(.5)\hole[ld]\ar@{-}|(.66)\hole[d]&&231\ar@{-}|(.5)\hole[rd]\ar@{-}|(.66)\hole[d]&&211\ar@{-}[llld]\ar@{-}[dd]\\
&212&213\ar@{-}[lld]\ar@{-}[rd]&111&132\ar@{-}[rrd]\ar@{-}[ld]&121&\\
112&&&123&&&122}$$

\textbf{Remark}. If $f$ and $g$ are two permutations, then:
\begin{align*}
f\leq g&\Longleftrightarrow \forall i,j\in [n],\: i>j\mbox{ and }f(i)<f(j)\Longrightarrow g(i)<g(j)\\
&\Longleftrightarrow Desc(f)\subseteq Desc(g).
\end{align*}
So the restriction of $\leq$ to $\mathfrak{S}_n$ is the right weak Bruhat order.

\begin{defi}
Let $P=(P,\leq_1,\leq_2)$ be a weak plane poset. We assume that $(P,\ll)=([n],\leq)$. A weak linear extension of $P$ is a surjective map
$f:[n]\longrightarrow [k]$ such that:
\begin{enumerate}
\item For all $i,j\in [n]$, $i\leq_1 j$ $\Longrightarrow$ $f(i)\leq f(j)$.
\item For all $i,j\in [n]$, if $i\leq_1 j$ and $f(i)=f(j)$ $\Longrightarrow$ $i\equiv j$.
\end{enumerate}
The set of weak linear extensions of $P$ is denoted by $WLin(P)$.
\end{defi}

In \cite{FM}, an order is defined on $PW(n)$: for all $f,g\in PW(n)$, $f\prec g$ if
\begin{enumerate}
\item For all $i,j\in [n]$, $g(i)\leq g(j)$ $\Longrightarrow$ $f(i)\leq f(j)$.
\item For all $i,j\in [n]$, $i<j$ and $g(i)>g(j)$ $\Longrightarrow$ $f(i)>f(j)$.
\end{enumerate}
It is proved that the following map is a Hopf algebra isomorphism:
$$\psi:\left\{\begin{array}{rcl}
(\WQSym,\shuffle,\Delta)&\longrightarrow&(\WQSym,.,\Delta)\\
f&\longrightarrow\displaystyle \sum_{g\preceq f} g.
\end{array}\right.$$

\begin{lemma}
Let $P$ be a weak plane poset. Then:
$$WLin(P)=\bigsqcup_{f\in Lin(P)}\{g\in PW(n), g\preceq f\}.$$
\end{lemma}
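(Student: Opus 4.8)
The plan is to carry out everything at the level of the packed word $w=pack(P)$, so that $P=P_w$ and, by the earlier proposition, $Lin(P)=\{f\in PW(n):w\le f\}$. Using $i\le_1 j\iff(i\ge j$ and $w(i)\le w(j))$ and $i\equiv j\iff w(i)=w(j)$, the membership $f\in Lin(P)$ reads ``$i\ge j$ and $w(i)\le w(j)\Rightarrow f(i)\le f(j)$'' together with ``$f(i)=f(j)\Rightarrow w(i)=w(j)$'', while $g\in WLin(P)$ reads the same first condition for $g$ together with ``$i\ge j,\ w(i)\le w(j),\ g(i)=g(j)\Rightarrow w(i)=w(j)$''; the latter two conditions combine into the implication that, for $i>j$, $w(i)<w(j)$ forces $g(i)<g(j)$. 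With these translations in hand I would prove the identity in three steps: every element of the right-hand side lies in $WLin(P)$; every element of $WLin(P)$ lies in at least one set $\{g\preceq f\}$ with $f\in Lin(P)$; and it lies in at most one, which is the disjointness encoded by $\bigsqcup$.

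For the first step I would take $f\in Lin(P)$ and $g\preceq f$ and check the two defining conditions of $WLin(P)$ directly. The first is immediate: if $i\ge j$ and $w(i)\le w(j)$ then $f(i)\le f(j)$ since $w\le f$, hence $g(i)\le g(j)$ by the first clause of $g\preceq f$. For the second, suppose $i>j$, $w(i)\le w(j)$ and $g(i)=g(j)$ but $w(i)<w(j)$; then $f(i)\le f(j)$, and $f(i)=f(j)$ is impossible since it would force $w(i)=w(j)$, so $f(i)<f(j)$, i.e. $j<i$ is an inversion of $f$. The descent clause of $g\preceq f$ then yields $g(j)>g(i)$, contradicting $g(i)=g(j)$; hence $w(i)=w(j)$ and $g\in WLin(P)$.

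The second and third steps I would handle together by exhibiting a canonical candidate. Given $g\in WLin(P)$, let $f_0$ be the packed word attached, under the bijection between total quasi-orders on $[n]$ and packed words, to the quasi-order that ranks $[n]$ first by the value of $g$ and breaks ties by the value of $w$, so that $f_0(i)\le f_0(j)$ iff $g(i)<g(j)$, or $g(i)=g(j)$ and $w(i)\le w(j)$. Then $g\preceq f_0$ holds: its first clause is built into the definition of $f_0$, and its descent clause uses the combined $WLin$ condition ($i>j,\ w(i)<w(j)\Rightarrow g(i)<g(j)$) to exclude the only problematic case. Likewise $w\le f_0$ holds, the first condition of $WLin(P)$ being exactly what controls the ties. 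This gives $g\in\{g\preceq f_0\}$ with $f_0\in Lin(P)$, proving the second step.

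Finally, disjointness amounts to showing $f_0$ is the \emph{only} $f\in Lin(P)$ with $g\preceq f$. For any such $f$, the first clause of $g\preceq f$ together with ``$f(i)=f(j)\Rightarrow w(i)=w(j)$'' forces each fibre of $f$ into a class on which $g$ and $w$ are both constant; conversely, combining $w\le f$ with the descent clause of $g\preceq f$ shows each such class is a single fibre of $f$ and that $f$ respects the lexicographic order between classes, so $f$ and $f_0$ have the same fibres and induced order, whence $f=f_0$. I expect this last step to be the main obstacle: pinning $f$ down exactly from the one-directional comparisons defining $\le$ and $\preceq$. The decisive leverage is the asymmetry between the inversion/descent clauses of the two orders, which is precisely what forbids splitting or permuting the lexicographic classes.
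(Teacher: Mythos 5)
Your proposal is correct and follows essentially the same route as the paper: your canonical candidate $f_0$ (the lexicographic refinement of $g$ with ties broken by $w$) is exactly the paper's $g'$, which is built there by packing the restrictions of $P$ to the fibres of $g$, and your verifications that $f_0\in Lin(P)$ and $g\preceq f_0$ match the paper's. The only cosmetic difference is in the disjointness step, where the paper directly shows that any two $f,f'\in Lin(P)$ admitting a common $g\preceq f,f'$ must coincide, while you pin any such $f$ down to the canonical $f_0$; both arguments go through.
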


\begin{proof} $\subseteq$. Let $g\in WLin(P)$. For any $p\in [\max(g)]$, we put $P_p=P_{\mid g^{-1}(p)}$.
Then $P_p$ is a weak plane poset, so there exists a unique packed word $f_p$ such that $P_p$ is isomorphic to $P_{f_p}$.
Let us define $g'$ by $g'(i)=f_p(i)+\max(f_1)+\ldots+\max(f_{p-1})$ for any $i\in P_p$; $g'$ is a packed word.

Let us show first that $g'\in Lin(p)$. Assume that $i\leq_1 j$. Then $g(i)\leq g(j)$. Let us show that we also have $g'(i)\leq g'(j)$:
\begin{itemize}
\item If we don't have $i\equiv j$, then, as $g$ is a weak linear extension of $P$, $g(i)<g(j)$, which implies $g'(i)<g'(j)$.
\item If $g(i)=g(j)=p$, then $i\equiv j$ in $P_p$, so $f_p(i)=f_p(j)$ and finally $g'(i)=g'(j)$.
\end{itemize}
Now, if $g'(i)=g'(j)$, then $g(i)=g(j)=p$ and $f_p(i)=f_p(j)$, so $i\equiv j$ in $P_p$ and finally $i\equiv j$: $g'\in Lin(P)$.

Let us show finally that $g\preceq g'$. 
If $g'(i)\leq g'(j)$, then necessarily $g(i)\leq g(j)$. Let us assume $i<j$ and $g'(i)>g'(j)$. Then $g(i)\geq g(j)$. If $g(i)=g(j)=p$,
then $f_p(i)>f_p(j)$, so $j\leq_i 1$ and we do not have $i\equiv j$: this contradicts the fact that $g$ is a weak linear extension. So $g(i)>g(j)$, and finally $g\preceq g'$.\\

$\supseteq$. Let $f\in Lin(P)$ and $g\preceq f$. If $i \leq_1 j$, then $f(i)\leq f(j)$, so $g(i)\leq g(j)$.
If moreover $g(i)=g(j)$, as $i\geq j$ (because $i\leq_1 j$), we can not have $f(i)<f(j)$ as $g\preceq f$, so $f(i)=f(j)$ and $i\equiv j$.
So $g\in WLin(P)$.\\

\emph{Disjoint union}. Let $f,f'\in Lin(P)$, such that there exists $g\in PW(n)$, $g\preceq f,f'$. Let us consider $i<j$.
If $f(i)>f(j)$, then $g(i)>g(j)$. If $f'(i)\leq f'(j)$, we would have $g(i)\leq g(j)$, contradiction. Hence, by symmetry:
\begin{align*}
\forall i,j\in [n] \mbox{ such that }i<j,\: f(i)>f(j)&\Longleftrightarrow f'(i)>f'(j),\\
 f(i)\leq f(j)&\Longleftrightarrow f'(i)\leq f'(j).
\end{align*}
Let us assume that $i<j$ and $f(i)=f(j)$. Then $i\equiv j$, and $f'(i)\leq f'(j)$. As $P$ is isomorphic to $P_h$ for a certain packed word $h$,
$i<j$ and $h(i)=h(j)$, so $j\leq_1 i$ in $P$; consequently, $f'(j)\leq f'(i)$ and finally $f'(i)=f'(j)$. As a conclusion:
\begin{align*}
\forall i,j\in [n] \mbox{ such that }i<j,\: f(i)>f(j)&\Longleftrightarrow f'(i)>f'(j),\\
 f(i)=f(j)&\Longleftrightarrow f'(i)=f'(j),\\
 f(i)<f(j)&\Longleftrightarrow f'(i)<f'(j).
\end{align*}
So $P_f=P_{f'}$, which implies $f=f'$. \end{proof}

\begin{prop}
The following map is a Hopf algebra isomorphism:
$$\phi'=\psi \circ \phi:\left\{\begin{array}{rcl}
\h_{WPP}&\longrightarrow&(\WQSym,.,\Delta)\\
P_f&\longrightarrow&\displaystyle\sum_{f\in WLin(P)} f.
\end{array}\right.$$
\end{prop}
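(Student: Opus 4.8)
The plan is to exploit that $\phi'$ is by definition the composite of two maps already known to be Hopf algebra isomorphisms, and then to identify the resulting sum by means of the preceding Lemma. Almost all of the work has been front-loaded into the Corollary and the Lemma, so the proof itself is short.

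First I would recall that $\phi\colon \h_{WPP}\to(\WQSym,\shuffle,\Delta)$ is a Hopf algebra isomorphism by the Corollary, and that $\psi\colon(\WQSym,\shuffle,\Delta)\to(\WQSym,.,\Delta)$ is a Hopf algebra isomorphism by \cite{FM}. Since the composite of two Hopf algebra isomorphisms is again a Hopf algebra isomorphism, $\phi'=\psi\circ\phi$ is one; no further compatibility needs to be checked for the isomorphism statement.

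It then remains to verify the closed formula. For a weak plane poset $P$ with $(P,\ll)=([n],\leq)$, I would compute directly, using only linearity of $\psi$ together with the definitions of $\phi$ and $\psi$,
\begin{align*}
\phi'(P)&=\psi(\phi(P))=\psi\Big(\sum_{f\in Lin(P)}f\Big)=\sum_{f\in Lin(P)}\psi(f)=\sum_{f\in Lin(P)}\sum_{g\preceq f}g.
\end{align*}
The decisive step is the Lemma, which records that
$$WLin(P)=\bigsqcup_{f\in Lin(P)}\{\,g\in PW(n):g\preceq f\,\}.$$
Because this union is \emph{disjoint}, each packed word $g\in WLin(P)$ is counted exactly once in the double sum above, while no word outside $WLin(P)$ appears; hence $\sum_{f\in Lin(P)}\sum_{g\preceq f}g=\sum_{g\in WLin(P)}g$, which is precisely the claimed expression for $\phi'(P)$.

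I do not expect any genuine obstacle here. The only point deserving care is that the union in the Lemma is disjoint rather than a mere covering: without disjointness the double sum could overcount and the identification with $\sum_{g\in WLin(P)}g$ would fail. Since the Lemma already establishes disjointness, the proposition follows from the short computation above.
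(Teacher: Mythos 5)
Your proposal is correct and follows essentially the same route as the paper: the isomorphism property comes for free from the composition of $\psi$ and $\phi$, and the closed formula is exactly the double-sum computation resolved by the disjoint-union Lemma. Your added remark on why disjointness (and not mere covering) is needed to avoid overcounting is a fair point of emphasis but not a departure from the paper's argument.
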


\begin{proof} Indeed, for any packed word $f$, by the preceding lemma:
$$\psi\circ \phi(P_f)=\sum_{f\in Lin(P)}\sum_{g\preceq f} g=\sum_{f\in WLin(P)}f.$$
By composition, $\phi'$ is an isomorphism.
\end{proof}

\textbf{Examples}. We order the packed words of degree $2$  in the following way:
(11,12,21).
\begin{enumerate}
\item The matrices of $\phi$ and $\phi'$ from the basis $(P_f)_{f\in PW(2)}$ to the basis $PW(2)$ are respectively given  by:
\begin{align*}
&\begin{pmatrix}
1&0&0\\ 
0&1&0\\
1&1&1
\end{pmatrix},&&\begin{pmatrix}
1&1&0\\ 
0&1&0\\
1&1&1
\end{pmatrix}.
\end{align*}
\item The matrix of the pairing of $\h_{WPP}$ in the basis $(P_f)_{f\in PW(2)}$ is given by:
\begin{align*}
&\begin{pmatrix}
1&1&0\\ 
1&2&1\\
0&1&0
\end{pmatrix}.
\end{align*}
\item Via $\phi$ and $\phi'$, $(\WQSym, \shuffle,\Delta)$ and $(\WQSym,.,\Delta)$ inherit nondegenerate Hopf pairings. 
The matrices of these pairings in the basis $PW(2)$ are respectively given by:
\begin{align*}
&\begin{pmatrix}
1&0&0\\ 
0&0&1\\
0&1&0
\end{pmatrix},&
\begin{pmatrix}
1&-1&0\\ 
-1&1&1\\
0&1&0
\end{pmatrix}.
\end{align*}
\end{enumerate}

\bibliographystyle{amsplain}
\bibliography{biblio}

\end{document}